\newtheorem{thm}{Theorem}[section]
\newtheorem{lem}[thm]{Lemma}
\newtheorem{cor}[thm]{Corollary}
\newtheorem{prop}[thm]{Proposition}
\theoremstyle{definition}
\newtheorem{defn}[thm]{Definition}
\newtheorem{exmp}[thm]{Example}
\newcommand{\iso}{\cong}
\newcommand{\Me}{\overline{\mathcal{M}}_{1,1}}
\newcommand{\Xf}{\mathfrak{X}}
\newcommand{\Mf}{\mathfrak{M}}
\newcommand{\Zf}{\mathfrak{Z}}
\newcommand{\Ef}{\mathfrak{E}}
\newcommand{\Yf}{\mathfrak{Y}}
\newcommand{\DM}{\mathbf{DM}(K,\mathbb{Q})}
\newcommand{\DTM}{\mathbf{DTM}(K,\mathbb{Q})}
\newcommand{\Ql}{\Qb_{\ell}}
\newcommand{\Dc}{\mathcal{D}}
\newcommand{\Oc}{\mathcal{O}}
\newcommand{\Pc}{\mathcal{P}}
\newcommand{\Lc}{\mathcal{L}}
\newcommand{\Q}{\mathbb{Q}}
\newcommand{\N}{\mathbb{N}}
\newcommand{\Pb}{\mathbb{P}}
\newcommand{\Ab}{\mathbb{A}}
\newcommand{\A}{\mathbb{A}}
\newcommand{\Fb}{\mathbb{F}}
\newcommand{\Gb}{\mathbb{G}}
\newcommand{\Lb}{\mathbb{L}}
\newcommand{\Qb}{\mathbb{Q}}
\newcommand{\Zb}{\mathbb{Z}}
\newcommand{\Fqbar}{\overline{\mathbb{F}}_q}
\DeclareMathOperator{\Sym}{Sym}
\DeclareMathOperator{\Hom}{Hom}
\begin{document}

    \title[]
    {Motive of the moduli stack of rational \\ curves on a weighted projective stack}
    
    \author{Jun--Yong Park and Hunter Spink}
    
    \address{Center for Geometry and Physics, Institute for Basic Science (IBS), Pohang 37673, Korea}
    \email{junepark@ibs.re.kr}

    \address{Department of Mathematics, Stanford University, Stanford, CA 94305, USA}
    \email{hspink@stanford.edu}

    \begin{abstract}    

    We show the compactly supported motive of the moduli stack of degree $n$ rational curves on the weighted projective stack $\mathcal{P}(a,b)$ is of mixed Tate type over any base field $K$ with $\text{char}(K) \nmid a,b$ and has class  $\mathbb{L}^{(a+b)n+1}-\mathbb{L}^{(a+b)n-1}$ in the Grothendieck ring of stacks. In particular, this improves upon the result of \cite{HP} regarding the arithmetic invariant of the moduli stack $\mathcal{L}_{1,12n} \coloneqq \mathrm{Hom}_{n}(\mathbb{P}^1, \overline{\mathcal{M}}_{1,1})$ of stable elliptic fibrations over $\mathbb{P}^{1}$ with $12n$ nodal singular fibers and a marked Weierstrass section.

    \end{abstract}
    
    \maketitle


    \section{Introduction}\label{sec:intro}

     Fix a base field $K$, and define for $a,b\in \N$ the  weighted projective stack $\Pc(a,b)\coloneqq[(\Ab_{x,y}^2\setminus 0)/\Gb_m]$ where $\lambda \in \Gb_m$ acts by $\lambda \cdot (x,y)=(\lambda^a x, \lambda^b y)$. In this paper, we consider the Voevodsky mixed motive of the Hom stack $\Hom_{n}(\Pb^{1}, \Pc(a,b))$, parameterizing degree $n$ morphisms $f:\Pb^1 \rightarrow \Pc(a,b)$. Of special interest is when $(a,b)=(4,6)$ and $\text{char}(K)\ne 2,3$: we then have $\Pc(4,6)=\overline{\mathcal{M}}_{1,1}$ so this Hom stack is equal to $\mathcal{L}_{1,12n}\coloneqq\Hom_n(\mathbb{P}^1,\overline{\mathcal{M}}_{1,1})$, the moduli stack of stable elliptic fibrations over $\mathbb{P}^1$ with $12n$ nodal singular fibers and a marked Weierstrass section (see \cite{HP}). For this particular case, \cite{HP} obtained the class in $K_0(\mathrm{Stck}_{/K})$ (the Grothendieck ring of stacks over $K$) as $\Lb^{10n+1}-\Lb^{10n-1}$ when $\text{char}(K)=0$,  where $\mathbb{L}\coloneqq[\mathbb{A}^1]$ denotes the Lefschetz motive, and the weighted $\mathbb{F}_q$-point count as $q^{10n+1}-q^{10n-1}$ when $2,3\nmid q$. In particular, $\mathcal{L}_{1,12n}$ is of ``Tate type'' in   the Grothendieck ring of stacks over $K$ for $\text{char}(K)=0$, i.e., it is a polynomial in $\mathbb{L}$. In \cite{Park}, the $\ell$-adic \'etale cohomology and Galois representations of $\mathcal{L}_{1,12n}$ were computed, showing the moduli stack $\Lc_{1,12n}$ is also of ``Tate type'' in the corresponding category.

     Our main theorem unifies and generalizes these results, showing the compactly supported motive $\Mf^{c}(\Hom_{n}(\Pb^{1}, \Pc(a,b)))$ when $\text{char}(K) \nmid a,b$ in Voevodsky's triangulated category $\DM$ of mixed motives with $\mathbb{Q}$-coefficients lies in the subcategory $\DTM$ of mixed Tate motives.

    \begin{thm} \label{thm:Tatemotivic}
    Let $K$ be a field with $\mathrm{char}(K) \nmid a,b$. Then, $\Hom_n(\mathbb{P}^1,\mathcal{P}(a,b))$ is of Tate type in $K_0(\mathrm{Stck}_{/K})$ and the compactly supported motive is of mixed Tate type, i.e.
        \[ \Mf^{c}\left(\Hom_{n}(\Pb^{1}, \Pc(a,b))\right) \in \DTM \subset \DM.\]
    \end{thm}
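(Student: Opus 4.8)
The plan is to realize $\Hom_n(\Pb^1,\Pc(a,b))$ as a global quotient $[W/\Gb_m]$ of an explicit smooth quasi-affine variety of ``coprime pairs of binary forms'', to compute the motive of this variety by an inductive stratification, and then to descend along the $\Gb_m$-quotient. Concretely, since $\Pc(a,b)=[(\Ab^2\setminus 0)/\Gb_m]$, a morphism $\Pb^1\to\Pc(a,b)$ is a line bundle $L$ on $\Pb^1$ together with a nowhere-vanishing section of $L^{\otimes a}\oplus L^{\otimes b}$; it has degree $n$ exactly when $\deg L=n$, so $L\iso\Oc(n)$ and the section becomes a pair $(f,g)$ with $f\in H^0(\Pb^1,\Oc(an))$ and $g\in H^0(\Pb^1,\Oc(bn))$, and for $n\ge 1$ the non-vanishing condition is precisely that $f,g$ are coprime. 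Writing $V_d\coloneqq H^0(\Pb^1,\Oc(d))\iso\Ab^{d+1}$, $V_d^{\ast}\coloneqq V_d\setminus 0$, and $W(d,e)\subset V_d\times V_e$ for the complement of the resultant hypersurface $\{\Res_{d,e}=0\}$ (the locus of coprime pairs), and letting $\Gb_m$ act by $\lambda\cdot(f,g)=(\lambda^a f,\lambda^b g)$, this yields $\Hom_n(\Pb^1,\Pc(a,b))\iso[\,W(an,bn)/\Gb_m\,]$. The hypothesis $\mathrm{char}(K)\nmid a,b$ makes $\Pc(a,b)$, hence this Hom stack, a smooth tame Deligne--Mumford stack, so $\Mfc$ is defined; moreover the $\Gb_m$-action has constant stabilizer $\mu_{\gcd(a,b)}$, so it factors through a \emph{free} action of $\Gb_m\iso\Gb_m/\mu_{\gcd(a,b)}$ and $W(an,bn)\to\Hom_n(\Pb^1,\Pc(a,b))$ is a $\Gb_m$-torsor up to a $\mu_{\gcd(a,b)}$-gerbe.

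For the Grothendieck-ring class I would stratify $V_d^{\ast}\times V_e^{\ast}$ by the degree $k$ of the $\gcd$: $V_d^{\ast}\times V_e^{\ast}=\bigsqcup_{k=0}^{\min(d,e)}S_k$, where multiplication by the common factor presents $S_k$ as the free (hence, by Hilbert~90, Zariski-locally trivial) $\Gb_m$-quotient of $(\Ab^{k+1}\setminus 0)\times W(d-k,e-k)$. Thus $[S_k]=(\Lb^{k+1}-1)[W(d-k,e-k)]/(\Lb-1)$ in $K_0(\mathrm{Stck}_{/K})$; summing over $k$ and passing to the generating function $\mathbf{W}(s,t)=\sum_{d,e\ge 0}[W(d,e)]\,s^d t^e$, the relation linearizes to $\mathbf{W}(s,t)=(1-\Lb st)(1-st)\,V(s)V(t)$ with $V(u)=\sum_d(\Lb^{d+1}-1)u^d=\tfrac{\Lb-1}{(1-\Lb u)(1-u)}$, whence $[W(d,e)]=(\Lb-1)^2(\Lb+1)\Lb^{d+e-1}$ for $d,e\ge 1$. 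Since $[\,[X/\Gb_m]\,]=[X]/(\Lb-1)$ for the special group $\Gb_m$, this gives
\[ [\Hom_n(\Pb^1,\Pc(a,b))]=\frac{(\Lb-1)^2(\Lb+1)\Lb^{(a+b)n-1}}{\Lb-1}=\Lb^{(a+b)n+1}-\Lb^{(a+b)n-1},\]
which is of Tate type (and equals $[\Ab^{(a+b)n-1}\times(\Ab^2\setminus 0)]$, foreshadowing the motivic statement).

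For the motive I would run the same stratification motivically, using that $\DTM\subset\DM$ is a thick triangulated subcategory, stable under Tate twists, shifts, direct summands, and localization triangles $\Mfc(Z)\to\Mfc(X)\to\Mfc(X\setminus Z)\xrightarrow{+1}$, containing $\Mfc(\Ab^m)$ and $\Mfc(\Gb_m)$, and --- crucially --- stable under $\Gb_m$-torsors: if $P\to B$ is a $\Gb_m$-torsor then $\Mfc(P)\in\DTM\iff\Mfc(B)\in\DTM$ (via the projective bundle formula for the associated $\Pb^1$-bundle, together with the fact that, with $\Qb$-coefficients, a Deligne--Mumford stack has the motive of its coarse space, which covers $[X/\Gb_m]$). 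Granting this, an induction on $d+e$ proves $\Mfc(W(d,e))\in\DTM$: the base cases $d=0$ or $e=0$ are products of $\Gb_m$'s and punctured affine spaces; for the inductive step, localization along $V_d^{\ast}\times V_e^{\ast}=S_0\sqcup\bigsqcup_{k\ge 1}S_k$ reduces $\Mfc(W(d,e))=\Mfc(S_0)$ to the Tate motive $\Mfc(V_d^{\ast}\times V_e^{\ast})$ and to the $\Mfc(S_k)$, $k\ge 1$, each of which lies in $\DTM$ since $S_k$ is the base of a $\Gb_m$-torsor whose total space $(\Ab^{k+1}\setminus 0)\times W(d-k,e-k)$ has mixed Tate motive by the inductive hypothesis and Künneth. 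Applying $\Gb_m$-torsor stability once more to $W(an,bn)\to\Hom_n(\Pb^1,\Pc(a,b))$ then gives $\Mfc(\Hom_n(\Pb^1,\Pc(a,b)))\in\DTM\subset\DM$.

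The main obstacle is the last ingredient. The combinatorics in the Grothendieck ring is essentially bookkeeping, and the identity $[\,[X/\Gb_m]\,]=[X]/(\Lb-1)$ makes the quotient formal there; but polynomiality of the class does not imply mixed Tateness, so this identity, and every $\Gb_m$-bundle appearing in the stratification, must be upgraded to honest statements about mixed motives --- in particular one needs a workable motive of the quotient \emph{stack} itself. This is exactly where the motives-of-stacks formalism is used in an essential way; once it is available, the inductive stratification goes through mechanically.
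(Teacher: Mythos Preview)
Your overall architecture matches the paper's: present the Hom stack as a $\Gb_m$-quotient, stratify by the degree of the common factor, and invoke Totaro's result that mixed-Tateness passes along $\mathrm{GL}_n$-torsors (this is exactly the paper's \Cref{cor:GmquotientTate}). The generating-function computation in $K_0$ is a pleasant repackaging and lands on the correct class.

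However, you misidentify where the difficulty lies. You write that the combinatorics in the Grothendieck ring is ``essentially bookkeeping'' and that the real obstacle is the motives-of-stacks formalism. In fact the stacky input is off-the-shelf (Totaro), while the step you treat as obvious is the paper's main technical contribution. You assert that multiplication presents $S_k$ as ``the free (hence, by Hilbert~90, Zariski-locally trivial) $\Gb_m$-quotient of $(\Ab^{k+1}\setminus 0)\times W(d-k,e-k)$''. But freeness plus Hilbert~90 only tells you that $P\to P/\Gb_m$ is a Zariski torsor; it does \emph{not} identify the abstract quotient $P/\Gb_m$ with the locally closed stratum $S_k\subset V_d\times V_e$. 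That identification is equivalent to showing the gcd can be recovered, locally on $S_k$, as a regular function of the coefficients of $(f,g)$---i.e.\ that the bijective multiplication map has an algebraic inverse. This is precisely the gap in the original Farb--Wolfson argument that the paper sets out to repair (see the corrigendum \cite{FW2}), and \Cref{prop:3.3isom} fixes it by stratifying according to a run of the Euclidean algorithm and exhibiting an explicit polynomial inverse on each piece. Without this (or an equivalent, e.g.\ via subresultants), neither your $K_0$ identity $[S_k]=(\Lb^{k+1}-1)[W(d-k,e-k)]/(\Lb-1)$ nor the motivic induction step is justified, especially in positive characteristic where bijective morphisms need not be piecewise isomorphisms.

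As to strategy: the paper works with monic dehomogenized polynomials, first stratifying $T$ by the pair $(\deg u,\deg v)$ into pieces $\Gb_m^2\times\mathrm{Poly}_1^{(k,\ell)}$, and then decomposes each $\mathrm{Poly}_1^{(k,\ell)}$ into explicit products of $\Ab^1$'s and $\Gb_m$'s via the Euclidean algorithm---yielding the stronger conclusion that these are linear varieties in the sense of \cite{Totaro2}. Your binary-forms-plus-torsor packaging is more uniform and would bypass the preliminary degree stratification, but to make it rigorous you still need exactly the input the paper supplies.
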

    Although $\Xf\in \DTM$ does not imply $\Xf$ is of Tate type in $K_0(\mathrm{Stck}_{/K})$, the same proof adapts to both results. This yields the following extension of  \cite[Theorem 1]{HP} for $\text{char}(K)>0$.
    
    \begin{cor}
        \label{thm:motivecount}
        If $\mathrm{char}(K)\nmid a,b$, then 
        \[[\Hom_n(\Pb^1,\Pc(a,b))] = \Lb^{(a+b)n+1}-\Lb^{(a+b)n-1}\;\in K_0(\mathrm{Stck}_{/K}),\]
        and if $\mathrm{char}(\Fb_q) \nmid a,b$, then we have the weighted $\mathbb{F}_q$-point count 
        $$|\Hom_n(\mathbb{P}^1,\mathcal{P}(a,b))(\mathbb{F}_q)|\coloneqq\sum_{x\in \Hom_n(\mathbb{P}^1,\mathcal{P}(a,b))(\mathbb{F}_q)} \frac{1}{|{\textup{Aut}}(x)|} =q^{(a+b)n+1}-q^{(a+b)n-1}.$$
    \end{cor}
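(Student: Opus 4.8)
The plan is to compute the class of the Hom stack directly from a concrete quotient presentation and then read off the point count. First I would identify the stack: a degree-$n$ morphism $\Pb^1 \to \Pc(a,b) = [(\Ab^2\setminus 0)/\Gb_m]$ (with $n \ge 1$) is the datum of a degree-$n$ $\Gb_m$-torsor on $\Pb^1$, necessarily that of $\Oc_{\Pb^1}(n)$ since $\Pic\Pb^1 = \Z$, together with a $\Gb_m$-equivariant map to $\Ab^2\setminus 0$; for the weight-$(a,b)$ action this is precisely a pair of binary forms $(x,y)$ with $x \in V_{an}$, $y \in V_{bn}$ and no common zero, where $V_d \coloneqq H^0(\Pb^1,\Oc(d)) \iso \Ab^{d+1}$. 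The $2$-automorphisms are $\Gb_m = \Gb_m(\Pb^1)$ acting by $\lambda\cdot(x,y) = (\lambda^a x,\lambda^b y)$, so
\[ \Hom_n(\Pb^1,\Pc(a,b)) \iso [U_n/\Gb_m], \qquad U_n \coloneqq \{(x,y)\in V_{an}\times V_{bn} : \gcd(x,y) = 1\}. \]
Since $\Gb_m$ is a special group, $[\Hom_n(\Pb^1,\Pc(a,b))] = [U_n]/(\Lb-1)$ in $K_0(\mathrm{Stck}_{/K})$, so everything reduces to computing $[U_n]$.

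Next I would reduce $[U_n]$ to a combinatorial count of disjoint divisor pairs. A nonzero binary form of degree $d$ is a nonzero scalar multiple of its divisor, so $\mathrm{div}\colon V_d\setminus 0 \to \Sym^d\Pb^1 \iso \Pb^d$ is a Zariski-locally-trivial $\Gb_m$-torsor, and $U_n$ is the preimage of the open locus $\Dc_{D,E}\subset\Pb^D\times\Pb^E$ ($D = an$, $E = bn$) of pairs of \emph{disjoint} effective divisors; hence $[U_n] = (\Lb-1)^2[\Dc_{D,E}]$. To compute $[\Dc_{D,E}]$ I would stratify $\Pb^D\times\Pb^E = \bigsqcup_{c=0}^{\min(D,E)} S_c$ by $c = \deg\gcd(A,B)$; using subresultants and exact polynomial division, the mutually inverse morphisms $(A,B)\mapsto(\gcd(A,B),A-\gcd,B-\gcd)$ and $(C,A',B')\mapsto(C+A',C+B')$ should exhibit $S_c \iso \Pb^c\times\Dc_{D-c,\,E-c}$ over $K$ (with $\Dc_{d,0} = \Dc_{0,d} = \Pb^d$), yielding the recursion $\sum_{c=0}^{\min(D,E)}[\Pb^c]\,[\Dc_{D-c,E-c}] = [\Pb^D][\Pb^E]$.

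Solving this recursion by induction on $\min(D,E)$ (the base case being one degree zero, where $\Dc$ is a projective space) should give $[\Dc_{D,E}] = \Lb^{D+E}+\Lb^{D+E-1}$ for $D,E\ge 1$, the inductive step being a routine geometric-series identity. Then
\[ [\Hom_n(\Pb^1,\Pc(a,b))] = \frac{(\Lb-1)^2(\Lb^{(a+b)n}+\Lb^{(a+b)n-1})}{\Lb-1} = \Lb^{(a+b)n+1}-\Lb^{(a+b)n-1}, \]
which is of Tate type. For the weighted point count over $\Fb_q$ with $\mathrm{char}(\Fb_q)\nmid a,b$: Lang's theorem makes every $\Gb_m$-torsor over $\Fb_q$ trivial, so the groupoid count of $[U_n/\Gb_m](\Fb_q)$ equals $|U_n(\Fb_q)|/(q-1)$; running the same stratification over $\Fb_q$ (or, equivalently, invoking $\Mf^c(\Hom_n(\Pb^1,\Pc(a,b)))\in\DTM$ from Theorem~\ref{thm:Tatemotivic} and the Behrend--Lefschetz trace formula) then gives $q^{(a+b)n+1}-q^{(a+b)n-1}$, the class evaluated at $\Lb = q$.

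The step I expect to require the most care is the identification $S_c \iso \Pb^c\times\Dc_{D-c,\,E-c}$ \emph{over an arbitrary base field}, i.e. checking that passing to the common part of two effective divisors (and dividing it out) is genuinely a morphism of $K$-varieties; this is exactly where the subresultant formalism is needed, and it is the only non-formal point, everything else being bookkeeping in $K_0$ together with the standard facts about special groups and Lang's theorem.
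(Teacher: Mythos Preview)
Your argument is correct and takes a genuinely different route from the paper's. The paper follows \cite{HP}: it stratifies $T$ into pieces $T_{k,\ell}\cong \Gb_m^2\times\mathrm{Poly}_1^{(k,\ell)}$ according to the degrees of $u,v$ (equivalently, the order of vanishing at $\infty$), and then computes $[\mathrm{Poly}_1^{(k,\ell)}]$ via the filtration $R_{1,j}$ together with \Cref{prop:3.3isom}, which shows the multiplication map $\Psi$ is a \emph{piecewise} isomorphism with pieces products of $\Gb_m$'s and $\Ab^1$'s (proved by tracking the degree pattern in the Euclidean algorithm). You instead pass immediately to divisors on $\Pb^1$, avoiding the $T_{k,\ell}$ stratification, and argue that the gcd stratum $S_c\subset\Pb^D\times\Pb^E$ is \emph{globally} isomorphic to $\Pb^c\times\Dc_{D-c,E-c}$ via subresultants; this is correct because on the locus $\{\deg\gcd=c\}$ the $c$-th principal subresultant coefficient is nonvanishing and $S_c/sr_c$ recovers the monic gcd (you should check the homogeneous formulation carefully so that roots at $\infty$ are handled, but this goes through). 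Your approach is cleaner for the Grothendieck-ring statement and in fact shows that the map $\Psi$ of \Cref{prop:3.3isom} is a global isomorphism on each stratum, not merely piecewise. What the paper's Euclidean-algorithm decomposition buys, and your argument does not, is the finer conclusion that each stratum is cut into pieces of the form $\Gb_m^a\times\Ab^b$; this is exactly what is needed for \Cref{thm:Tatemotivic}, since knowing the class in $K_0$ does not by itself give the mixed Tate property of the motive. Finally, you are right to flag the gcd-extraction step as the delicate one: it is precisely the point where \cite{FW} erred (see \cite{FW2}), and the paper's \Cref{prop:3.3isom} is its replacement.
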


   In the above corollary, because we have the class of $\Hom_n(\mathbb{P}^1,\mathcal{P}(a,b))\in K_0(\mathrm{Stck}_{/\Fb_q})$,  we are able to directly acquire the weighted point count over $\Fb_q$ with $\mathrm{char}(\Fb_q) \neq 2,3$ via the point counting measure $\#_q: K_0(\mathrm{Stck}_{/\Fb_q}) \rightarrow \Q$ (c.f. \cite[\S 2]{Ekedahl}).

   

 The following diagram with $\Xf=\Hom_n(\mathbb{P}^1,\mathcal{P}(a,b))$ summarizes the above discussion.

    \begin{center}
        \begin{tikzcd}
            &  &  & {\Mf^{c}(\Xf)} \arrow[rrrdddddd, "\mathrm{\acute{E}tale~Realization~Functor}"] &  &  &  \\
            &  &  &  &  &  &  \\
            &  &  &  &  &  &  \\
            &  &  &  &  &  &  \\
            &  &  &  &  &  &  \\
            &  &  &  &  &  &  \\
            \big[\Xf\big] \in K_0(\mathrm{Stck}_{/\Fb_q}) \arrow[rrrdddddd, "\mathrm{Point~Counting~Measure}"'] &  &  & \mathrm{\Xf} \in \mathrm{Stck}_{/\Fb_q} \arrow[rrr, "\mathrm{\acute{E}tale~Cohomology}"] \arrow[lll, "\mathrm{Grothendieck~Class}"'] \arrow[dddddd, "\mathrm{Weighted~\Fb_q-Point~Count}" description] \arrow[uuuuuu, "\mathrm{Voevodsky~Mixed~Motive}" description] &  &  & H^i_{\acute{e}t,c}({\Xf}_{/\Fqbar};\Ql) \arrow[llldddddd, "\mathrm{Grothendieck-Lefschetz~Trace~Formula}"] \\ 
            &  &  &  &  &  &  \\
            &  &  &  &  &  &  \\
            &  &  &  &  &  &  \\
            &  &  &  &  &  &  \\
            &  &  &  &  &  &  \\
            &  &  & \#_q(\Xf) &  &  & 
        \end{tikzcd}
    \end{center}

    The main technical result we prove establishes a certain morphism $\Psi$ considered in Step 2 of the proof of \cite[Proposition 18]{HP} (see also \cite[Equation (3.3)]{FW}) is a piecewise isomorphism, with pieces products of $\mathbb{G}_m$'s and $\mathbb{A}^1$'s. We remark that this partially fixes the argument in \cite[Proof of Theorem 1.2]{FW} (see the corrigendum \cite{FW2}), which was then used in \cite[Proof of Lemma 6.2]{Horel}.
    
    Recall that we have the canonical identification $ \mathbb{A}^m \cong \text{Sym}^m(\mathbb{A}^1)$ by identifying $(a_1,\ldots,a_m)$ with the (unordered collection of roots of the) monic polynomial $f(z)=z^m+a_1z^{m-1}+\ldots+a_m$. Under this identification, consider the morphism
    \begin{align*}\overline{\Psi}:\Ab^{d_1-k} \times \Ab^{d_2-k} \times \Ab^k &\rightarrow \Ab^{d_1}\times \mathbb{A}^{d_2}\\
    \overline{\Psi}(f_1,f_2,g)&=(f_1g,f_2g).
    \end{align*} 

    \par Let $\mathrm{Poly}_1^{(d_1-k,d_2-k)} \subset \Ab^{d_1-k} \times \Ab^{d_2-k}$ be the open subvariety parameterizing the pairs of monic coprime polynomials of degrees $(d_1-k,d_2-k)$, and let $R_{1,\ell}^{(d_1,d_2)}\subset \Ab^{d_1+d_2}$ be the closed subvariety of monic polynomials which share a common factor of degree $\ell$.

    \begin{prop}\label{prop:3.3isom}
    There is a locally closed decomposition of $\textup{Poly}_1^{(d_1-k,d_2-k)}=\bigsqcup C_i$ where each $C_i$ is isomorphic to a product of $\mathbb{A}^1$'s and $\mathbb{G}_m$'s, such that the surjective morphism
     \[\Psi: \mathrm{Poly}_1^{(d_1-k,d_2-k)} \times \Ab^k \rightarrow R_{1,k}^{(d_1,d_2)}\setminus R_{1,k+1}^{(d_1,d_2)},\] obtained by restricting $\overline{\Psi}$, is an isomorphism when restricted to each $C_i\times \mathbb{A}^k$.
    \end{prop}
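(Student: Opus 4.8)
The proposition bundles two intertwined claims: that $\mathrm{Poly}_1^{(d_1-k,d_2-k)}$ has a locally closed decomposition into products of $\mathbb{A}^1$'s and $\mathbb{G}_m$'s, and that $\Psi$ interacts with it so as to be a piecewise isomorphism. I would prove both at once using a single device, repeated Euclidean division, which produces the decomposition and at the same time makes the (manifestly bijective) map $\Psi$ piecewise invertible. First record the trivialities: since $f_1,f_2$ are coprime, $\gcd(f_1g,f_2g)=g$ is monic of degree exactly $k$, so $\Psi$ does land in $R_{1,k}^{(d_1,d_2)}\setminus R_{1,k+1}^{(d_1,d_2)}$, and $\Psi$ is set-theoretically a bijection, the inverse on points being $(F_1,F_2)\mapsto(F_1/g,F_2/g,g)$ with $g=\gcd(F_1,F_2)$ taken monic. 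The actual content is to upgrade this to an isomorphism on each piece --- a bijective morphism of varieties need not be an isomorphism --- so the whole difficulty lies in exhibiting a \emph{regular} inverse, and that is exactly what the piecewise structure is for.

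For the decomposition of $\mathrm{Poly}_1^{(m,n)}$ (write $m=d_1-k$, $n=d_2-k$), I would induct on $\min(m,n)$, and by the symmetry $(f_1,f_2)\mapsto(f_2,f_1)$ assume $m\ge n$. If $n=0$ then $\mathrm{Poly}_1^{(m,0)}=\mathbb{A}^m$ and there is nothing to do. If $n\ge 1$: because $f_2$ is monic, the Euclidean division $f_1=qf_2+r$ with $q$ monic of degree $m-n$ and $\deg r<n$ uses no inversion of a leading coefficient, so $(f_1,f_2)\mapsto(q,r)$ is a morphism $\mathbb{A}^m\times\mathbb{A}^n\to\mathbb{A}^{m-n}\times\mathbb{A}^n$, and coprimality forces $r\ne 0$. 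Stratifying by the locally closed condition $\deg r=j$ for $0\le j\le n-1$ and writing $r=c\tilde r$ with $c\in\mathbb{G}_m$ the leading coefficient and $\tilde r$ monic of degree $j$, the assignment $(f_1,f_2)\mapsto(q,c,(f_2,\tilde r))$ is an isomorphism of the stratum onto $\mathbb{A}^{m-n}\times\mathbb{G}_m\times\mathrm{Poly}_1^{(n,j)}$, with inverse $(q,c,(f_2,\tilde r))\mapsto(qf_2+c\tilde r,f_2)$ --- here coprimality of $(f_1,f_2)$ is equivalent to that of $(f_2,\tilde r)$. Since $j<n=\min(m,n)$, the inductive hypothesis splits $\mathrm{Poly}_1^{(n,j)}$ into products of $\mathbb{A}^1$'s and $\mathbb{G}_m$'s, so substituting gives $\mathrm{Poly}_1^{(m,n)}=\bigsqcup_i C_i$, the pieces indexed by the successive remainder degrees, i.e. by the degree profile of the iterated Euclidean algorithm applied to $(f_1,f_2)$.

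For the isomorphism statement, fix a piece $C_i$; by construction the degree profile of the Euclidean algorithm on $(f_1,f_2)$ is constant on $C_i$. Given $(f_1,f_2,g)\in C_i\times\mathbb{A}^k$ and $(F_1,F_2)=\Psi(f_1,f_2,g)=(f_1g,f_2g)$, multiplying each line of the Euclidean algorithm through by $g$ shows the algorithm on $(F_1,F_2)$ has the same successive quotients and all remainders scaled by $g$; hence its degree profile is that of $(f_1,f_2)$ with $k$ added to every remainder degree, in particular constant on $\Psi(C_i\times\mathbb{A}^k)$, and it terminates at a nowhere-vanishing scalar multiple of $g$. Running this fixed-length, fixed-profile algorithm (each step being division by a monic polynomial, after normalizing the previous remainder by its nonzero leading coefficient) is therefore a morphism on $\Psi(C_i\times\mathbb{A}^k)$; normalizing its last output to be monic recovers $g$, then $F_1/g$ and $F_2/g$ by division by a monic polynomial, and finally inverting the explicit parametrization of $C_i$ from the previous paragraph recovers the original point. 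This is a regular inverse, so $\Psi$ restricts to an isomorphism from $C_i\times\mathbb{A}^k$ onto its image; that image is the locally closed locus of $R_{1,k}^{(d_1,d_2)}\setminus R_{1,k+1}^{(d_1,d_2)}$ on which the Euclidean profile takes the corresponding value, and since $\Psi$ is a bijection these images partition the target.

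I expect the isomorphism --- not the decomposition --- to be the crux: one must extract $g=\gcd(F_1,F_2)$ and the cofactors $F_1/g,F_2/g$ by regular operations, and the gcd is \emph{not} a morphism on all of $\mathbb{A}^{d_1}\times\mathbb{A}^{d_2}$; it becomes one only after restricting to a locus on which the Euclidean degree profile is pinned down, which is precisely why the statement must be phrased piecewise (and why one cannot simply assert that $\overline{\Psi}$ restricts to an isomorphism, the gap in \cite{FW} flagged in \cite{FW2} and propagated in \cite{Horel}). One could alternatively invoke subresultant theory to see directly that $\Psi$ is globally an isomorphism onto $R_{1,k}^{(d_1,d_2)}\setminus R_{1,k+1}^{(d_1,d_2)}$, but the recursive-division argument is elementary and delivers the decomposition of $\mathrm{Poly}_1^{(d_1-k,d_2-k)}$ for free; the remaining bookkeeping --- that the various loci really are locally closed, and the degenerate cases $d_1=k$ or $d_2=k$ in which one of $f_1,f_2$ is the constant $1$ --- is routine.
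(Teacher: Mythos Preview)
Your proposal is correct and follows essentially the same strategy as the paper: both stratify $\mathrm{Poly}_1^{(d_1-k,d_2-k)}$ by the degree profile of the Euclidean algorithm, observe that each stratum is a product of $\mathbb{A}^1$'s and $\mathbb{G}_m$'s (your inductive step $(f_1,f_2)\mapsto(q,c,(f_2,\tilde r))$ is exactly the paper's ``running the Euclidean algorithm backwards'' identification $C_{x,j}=C_{x,j+1}\times\mathbb{G}_m\times\mathbb{A}^{\deg q}$), and then note that on each stratum the fixed-profile Euclidean algorithm on $(f_1g,f_2g)$ is a regular morphism recovering $g$. The only notable difference is organisational: the paper proves the statement for $m$-tuples $(f_1,\ldots,f_m)$ and specialises to $m=2$, building the strata by fixing a closed point and carving out the locus where the algorithm behaves identically, whereas you induct directly on $\min(d_1-k,d_2-k)$; the resulting decomposition and inverse are the same.
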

    In fact, we are able to show (\Cref{prop:MoreGeneral}) a more general result concerning $m$-tuples $(f_1,\ldots,f_m)$. To do this, we decompose the domain according to how a running of the Euclidean algorithm would behave, and show on each piece that we can reconstruct $g$ from the coefficients of $f_1g,\ldots,f_mg$ in a polynomial fashion (although different pieces may involve different reconstructions).

    \section{Definitions}\label{sec:ModuliofRationalCurves}

    \par We first recall some definitions concerning the stack $\Pc(a,b)$.

    \begin{defn}\label{def:wtproj}
        For $a,b\in \N$, the weighted projective stack $\mathcal{P}(a,b)$ is defined as the quotient stack
        \[
        \Pc(a,b)\coloneqq[(\Ab_{x,y}^2\setminus 0)/\Gb_m],
        \]
        where $\lambda \in \Gb_m$ acts by $\lambda \cdot (x,y)=(\lambda^a x, \lambda^b y)$. In this case, $x$ and $y$ have degrees $a$ and $b$, respectively. The line bundle $\Oc_{\Pc(a,b)}(m)$ is defined to be the line bundle associated with the sheaf of degree $m$ homogeneous rational functions without poles on $\Ab^2_{x,y}\setminus 0$. We say that a map $\mathbb{P}^1\to \mathcal{P}(a,b)$ has degree $n$ if $f^*\mathcal{O}_{\mathcal{P}(a,b)}(1)\cong \mathcal{O}_{\mathbb{P}^1}(n)$.
    \end{defn}
    
    \begin{exmp}
        When $\text{char}(K)\ne 2,3$, \cite[Proposition 3.6]{Hassett} shows that one example is given by the proper Deligne--Mumford stack of stable elliptic curves $(\Me)_K \cong [ (\mathrm{Spec}~K[a_4,a_6]-(0,0)) / \Gb_m ] = \Pc_K(4,6)$ by using the short Weierstrass equation $y^2 = x^3 + a_4x + a_6x$, where $\lambda \cdot a_i=\lambda^i a_i$ for $\lambda \in \Gb_m$ and $i=4,6$. This is no longer true when $\text{char}(K)\in \{2,3\}$, as the Weierstrass equations are more complicated. 
    \end{exmp}
    \begin{defn}
    The stack $\Hom_n(\mathbb{P}^1,\mathcal{P}(a,b))$ is defined to be the Hom stack of degree $n$ morphisms $\mathbb{P}^1\to \mathcal{P}(a,b)$. Concretely, $$\Hom_n(\mathbb{P}^1,\mathcal{P}(a,b))=[T/\mathbb{G}_m]$$ where $T\subset (H^0(\mathcal{O}_{\mathbb{P}^1}(an))\setminus 0)\oplus (H^0(\mathcal{O}_{\mathbb{P}^1}(bn))\setminus 0)$ is the open subset of pairs of nonzero polynomials $(\widetilde{u}(z,t),\widetilde{v}(z,t))$, homogeneous of degrees $an$ and $bn$, respectively, sharing no common factor, and $\lambda \in \mathbb{G}_m$ acts by $\lambda \cdot (\widetilde{u}(z,t),\widetilde{v}(z,t))\coloneqq(\widetilde{u}(\lambda z, \lambda t),\widetilde{v}(\lambda z, \lambda t))=(\lambda^a \cdot \widetilde{u}(z,t),\lambda^b \cdot \widetilde{v}(z,t))$.
    \end{defn}

    \begin{prop}\label{prop:DMstack}
        For $a,b,n \in \N$ and  $\mathrm{char}(K)\nmid a,b$, the Hom stack $\Hom_n(\Pb^1,\Pc(a,b))$ parameterizing degree $n$ morphisms $f:\Pb^1 \rightarrow \Pc(a,b)$ is a smooth separated tame Deligne--Mumford stack of finite type and dimension $(a+b)n+1$.
    \end{prop}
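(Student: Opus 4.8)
The plan is to argue entirely with the explicit presentation $\Hom_n(\Pb^1,\Pc(a,b)) = [T/\Gb_m]$ recorded in the definition above, where $T$ is the open subscheme of $\bigl(H^0(\Oc_{\Pb^1}(an))\setminus 0\bigr)\oplus\bigl(H^0(\Oc_{\Pb^1}(bn))\setminus 0\bigr)$ parameterizing pairs $(\widetilde u,\widetilde v)$ of homogeneous polynomials of degrees $an,bn$ with no common zero on $\Pb^1$, and $\Gb_m$ acts with weights $(a,b)$. First I would record that $T$ is a smooth quasi-affine $K$-variety of finite type: it is the complement of the resultant hypersurface $\{\Res_{an,bn}(\widetilde u,\widetilde v)=0\}$ inside $(\Ab^{an+1}\setminus 0)\times(\Ab^{bn+1}\setminus 0)\subset\Ab^{(a+b)n+2}$, hence open in a smooth affine variety; it is nonempty (for instance $(z^{an},t^{bn})\in T$), of dimension $(a+b)n+2$. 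Because $\Gb_m$ is a smooth affine algebraic group acting on the finite-type $K$-scheme $T$, the quotient $[T/\Gb_m]$ is an algebraic stack of finite type over $K$, and $q\colon T\to[T/\Gb_m]$ is a $\Gb_m$-torsor, hence smooth surjective of relative dimension $1$. Therefore $[T/\Gb_m]$ is smooth over $K$ (it is covered by the smooth $K$-scheme $T$ via the smooth surjection $q$) and has dimension $\dim T-1=(a+b)n+1$.

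Next I would compute the inertia. If $(\widetilde u,\widetilde v)$ is a point of $T$, then $\widetilde u$ and $\widetilde v$ are both nonzero (by the no-common-zero condition, as $an,bn>0$), so the stabilizer of $(\widetilde u,\widetilde v)$ in $\Gb_m$ is $\{\lambda:\lambda^a\widetilde u=\widetilde u,\ \lambda^b\widetilde v=\widetilde v\}=\{\lambda:\lambda^a=\lambda^b=1\}=\mu_d$, with $d\coloneqq\gcd(a,b)$; the same computation in families identifies the inertia with $[\mu_d\times T/\Gb_m]$, so $I_{[T/\Gb_m]}\to[T/\Gb_m]$ is finite. Since $d\mid a$ and $\mathrm{char}(K)\nmid a$ we get $\mathrm{char}(K)\nmid d$, so $\mu_d$ is a finite \'etale (after base change, constant) $K$-group scheme, which is moreover linearly reductive. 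Unramifiedness of the inertia then shows $[T/\Gb_m]$ is Deligne--Mumford, and finiteness of the inertia together with linear reductivity of the geometric stabilizers $\mu_d$ shows, by Abramovich--Olsson--Vistoli, that it is tame. (If one allows $n=0$, then $T\cong\Gb_m^2$ and all of the above goes through with an even simpler version of the separatedness argument below.)

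It remains to prove separatedness. The diagonal of $[T/\Gb_m]$ is representable, separated, and of finite type --- as is standard for a quotient by an affine group, $T\times_{[T/\Gb_m]}T\cong\Gb_m\times T$ --- so by the valuative criterion for separatedness it is enough to show that, for every discrete valuation ring $R$ with fraction field $F$ and valuation $v$, any isomorphism between the restrictions to $\Spec F$ of two $\Spec R$-points of $[T/\Gb_m]$ extends to $\Spec R$. Since $\Gb_m$-torsors over $\Spec R$ and over $\Spec F$ are trivial, $\Spec R$- and $\Spec F$-points are $\Gb_m$-orbits of $T(R)$ and $T(F)$, and such an isomorphism is a $\lambda\in F^\times$ with $(\widetilde u,\widetilde v)=(\lambda^a\widetilde u',\lambda^b\widetilde v')$ for some $(\widetilde u,\widetilde v),(\widetilde u',\widetilde v')\in T(R)$. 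But $\Res_{an,bn}$ is a unit on both pairs (this is exactly the no-common-zero condition), and by homogeneity of the resultant $\Res(\widetilde u,\widetilde v)=\lambda^{2abn}\Res(\widetilde u',\widetilde v')$, so $\lambda^{2abn}\in R^\times$, whence $2abn\cdot v(\lambda)=0$, whence $v(\lambda)=0$, i.e.\ $\lambda\in\Gb_m(R)$ and the isomorphism is already defined over $\Spec R$. I expect separatedness to be the only step requiring genuine care, since it is precisely here that the no-common-zero condition carving $T$ out of affine space is indispensable --- without it the non-proper group $\Gb_m$ would force non-separatedness --- while the inertia computation is the one place where the hypothesis $\mathrm{char}(K)\nmid a,b$ is genuinely used, ensuring that $\mu_{\gcd(a,b)}$ is \'etale and linearly reductive (so that the stack is Deligne--Mumford and tame) rather than a possibly non-reduced finite group scheme.
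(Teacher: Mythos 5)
Your proof is correct, but it takes a genuinely more self-contained route than the paper's, which simply invokes \cite[Proposition 9, Proof of Theorem 1]{HP}: there, algebraicity and smoothness of the Hom stack come from Olsson's general theorem on $\underline{\mathrm{Hom}}$-stacks, separatedness from the assertion that $\Gb_m$ acts properly on $T$ with positive weights, and tameness from \cite[Theorem 3.2]{AOV}. You instead work everything out from the explicit presentation $[T/\Gb_m]$: the dimension count $\dim T-1=(a+b)n+1$, the identification of every stabilizer with $\mu_{\gcd(a,b)}$ (which pins down exactly where $\mathrm{char}(K)\nmid a,b$ is used, namely to make the stabilizers \'etale so the stack is Deligne--Mumford), and a direct valuative-criterion verification of separatedness via the homogeneity $\Res(\lambda^a\widetilde u',\lambda^b\widetilde v')=\lambda^{2abn}\Res(\widetilde u',\widetilde v')$ --- which is in effect a proof of the properness of the action that the paper only asserts. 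What your approach buys is transparency and independence from Olsson's machinery (the explicit presentation is taken as the definition in this paper, so this is legitimate); what the paper's citation buys is brevity and the fact that Olsson's theorem identifies the abstractly defined Hom stack as an algebraic stack in the first place. One small refinement: $\mu_d$ is diagonalizable, hence linearly reductive in every characteristic, so tameness actually holds without the hypothesis $\mathrm{char}(K)\nmid a,b$; that hypothesis is needed only for $\mu_d$ to be \'etale, i.e.\ for the Deligne--Mumford property. This is a point of emphasis, not a gap.
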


    \begin{proof} 
    This was established in \cite[Proposition 9, Proof of Theorem 1]{HP}. To recall the major points therein, $\Hom_n(\Pb^1,\Pc(a,b))=[T/\mathbb{G}_m]$ is a smooth Deligne--Mumford stack by \cite[Theorem 1.1]{Olsson}, admitting $T$ as a smooth schematic cover.
    As $\Gb_m$ acts on $T$ properly with positive weights $a,b\in \N$, the quotient stack $[T/\Gb_m]$ is separated. It is tame by \cite[Theorem 3.2]{AOV} since $\mathrm{char}(K)\nmid a,b$.
    \end{proof}
    As a polynomial in $H^0(\mathcal{O}_{\mathbb{P}^1}(an)) \setminus 0$ or $H^0(\mathcal{O}_{\mathbb{P}^1}(bn)) \setminus 0$ is determined by its restriction away from $\infty=[0:1]$ (i.e. by evaluating $t=1$), we may consider $T$ as the set of not necessarily monic pairs of nonzero coprime polynomials $(u(z),v(z))$ such that either $\deg u= an$ and $0\le \deg v \le bn$ or $\deg v=bn$ and $0\le \deg u \le an$. Here the degree conditions are precisely ensuring that $\widetilde{u},\widetilde{v}$ do not share a common zero at $\infty$.

\begin{defn}
    For $(k,\ell)\in \{(an,0),\ldots, (an,bn-1),(0,bn),\ldots,(an-1,bn),(an,bn)\}$, we define 
    $$T_{k,l}\coloneqq\{(u,v) \in T : \deg u=k, \; \deg v=l \}\subset T.$$
\end{defn}
Notice that $\overline{T_{k,bn}}= \bigsqcup_{0\le k' \le k} T_{k',bn}$, and $\overline{T_{an,\ell}}=\bigsqcup_{0 \le \ell'\le \ell} T_{an,\ell'}$, so we obtain the stratification
    
    \begin{align}
    T&=T_{an,bn} \sqcup \left(\bigsqcup_{k=0}^{an-1} T_{k,bn}\right) \sqcup \left(\bigsqcup_{l=0}^{bn-1} T_{an,l} \right)
    \end{align}
    \begin{align*}
    T&=\overline{T_{an,bn}} \supsetneq \overline{T_{an-1,bn}} \supsetneq \dotsb \supsetneq \overline{T_{0,bn}}=T_{0,bn}\\
    T&=\overline{T_{an,bn}} \supsetneq \overline{T_{an,bn-1}} \supsetneq \dotsb \supsetneq \overline{T_{an,0}}=T_{an,0}
    \end{align*}
    \[\overline{T_{an-k,bn}} \cap \overline{T_{an,bn-l}} =\emptyset ~~\; \forall k,l>0. \]
    \begin{defn}\label{def:poly}
    For any $k,\ell \ge 0$, define \[\text{Poly}_1^{k,l}\coloneqq\{(u(z),v(z)):  u(z),v(z) \text{ are monic and coprime of degrees $k,\ell$ respectively} \}.\]
    Equivalently, $\text{Poly}_1^{(k,l)}$ is the complement of the resultant hypersurface $\mathcal{R}^{(k,\ell)}\subset\Sym^k \mathbb{A}^1 \times \Sym ^\ell \mathbb{A}^1=\mathbb{A}^k\times \mathbb{A}^\ell$ of pairs of monic polynomials $(u(z),v(z))$ of degrees $k,\ell$  respectively which share a factor.
    \end{defn}
   Note that when $T_{k,\ell}$ is defined, we have $\mathbb{G}_m\times \mathbb{G}_m \times \text{Poly}_1^{(k,\ell)} \cong T_{k,\ell}$, taking $(\lambda_1,\lambda_2,u,v)\mapsto (\lambda_1u,\lambda_2v)$. The arithmetic properties of $\mathrm{Poly}_1^{(k,l)}$ were studied in \cite{HP}, inspired by \cite{Segal, FW}.

    \begin{defn}\label{def:filtration}
        Let $R_{1,k}^{(d_1,d_2)}\subset \A^{d_1}\times \A^{d_2}$ be the pairs $(u,v)$ of monic polynomials of degree $d_1,d_2$ respectively for which there exists a common factor of degree $\ge k$. Alternately, $R_{1,k}^{(d_1,d_2)}$ is the loci where the Sylvester matrix, whose determinant computes the resultant of $u$ and $v$, is of rank $\le d_1+d_2-k$.
    \end{defn}  
    
    The extraneous index $1$ is kept for consistency with \cite{FW}. Note that $R_{1,0}^{(d_1,d_2)} = \A^{d_1}\times \A^{d_2}$ and $R_{1,1}^{(d_1,d_2)}=\mathcal{R}^{(d_1,d_2)}$, and the $R_{1,k}^{(d_1,d_2)}$ give a descending filtration of closed subvarieties
    \[ \A^{d_1}\times \A^{d_2} = R_{1,0}^{(d_1,d_2)} \supset R_{1,1}^{(d_1,d_2)} \supset \cdots \supset R_{1,\min(d_1,d_2)+1}^{(d_1,d_2)}=\emptyset.    \] 
    
    \section{Proof of \Cref{prop:3.3isom}}
    We will in fact prove a more general statement concerning $m$-tuples $f_1(z),\ldots,f_m(z)$ of polynomials of degrees $d_1-k,\ldots,d_m-k$, with the $m=2$ case specializing to \Cref{prop:3.3isom}. Recall that $\Sym^{d}\mathbb{A}^1=\mathbb{A}^{d}$ parametrizes monic polynomials of degree $d_i$, so we may say $f_i\in\mathbb{A}^{d_i-k}$.
    
    Let $\text{Poly}_1^{(d_1-k,\ldots,d_m-k)}\subset \mathbb{A}^{d_1-k}\times \ldots \times \mathbb{A}^{d_m-k}$ be the open subvariety where the $f_i$ do not all share a common factor. Concretely, this is where the Sylvester matrix of $f_1,\ldots,f_m$ has rank at most $(d_1-k)+\ldots+(d_m-k)-m+1$. Let $R_{1,\ell}^{(d_1,\ldots,d_m)}\subset \mathbb{A}^{d_1}\times \ldots \times \mathbb{A}^{d_m}$ be the subvariety of tuples of monic polynomials $(h_1,\ldots,h_m)$ of degrees $d_1,\ldots,d_m$ which share a common factor of degree $\ge \ell$. Concretely, this is where the Sylvester matrix of $h_1,\ldots,h_m$ has rank at most $d_1+\ldots+d_m-\ell (m-1)$.
    \begin{prop}\label{prop:MoreGeneral}
    There is a locally closed decomposition $\textup{Poly}_1^{(d_1-k,\ldots,d_m-k)}=\bigsqcup C_i$ where each $C_i$ is isomorphic to a product of $\mathbb{A}^1$'s and $\mathbb{G}_m$'s, such that the surjective morphism
    \begin{align*}\Psi:\textup{Poly}_1^{(d_1-k,\ldots,d_m-k)}\times \mathbb{A}^k &\to R^{(d_1,\ldots,d_m)}_{1,k}\setminus R_{1,k+1}^{(d_1,\ldots,d_m)}\\
    (f_1,\ldots,f_m,g)&\mapsto (f_1g,\ldots,f_mg)\end{align*}
    is an isomorphism when restricted to each $C_i\times \mathbb{A}^k$.
    \end{prop}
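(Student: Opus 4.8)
The plan is to show that $\Psi$ is a bijective morphism onto $R^{(d_1,\ldots,d_m)}_{1,k}\setminus R^{(d_1,\ldots,d_m)}_{1,k+1}$ and then to produce a regular inverse after decomposing the source. That $\Psi$ is a morphism into the asserted target is clear: products of polynomials are polynomial in the coefficients, and $\gcd(f_1g,\ldots,f_mg)=g\cdot\gcd(f_1,\ldots,f_m)=g$ is monic of degree exactly $k$. Bijectivity is equally immediate, since the preimage of a tuple $(h_1,\ldots,h_m)$ of monic polynomials of degrees $d_1,\ldots,d_m$ whose monic gcd $g$ has degree exactly $k$ is forced to be $(h_1/g,\ldots,h_m/g,g)$. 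So the whole content lies in the fact that the assignment $(h_1,\ldots,h_m)\mapsto g$ is \emph{not} a morphism globally — extracting a gcd is only piecewise polynomial — and one must find a locally closed decomposition of the source on which it becomes one, and which at the same time exhibits the required cell structure.

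For the decomposition I would proceed recursively. First stratify $\textup{Poly}_1^{(d_1-k,\ldots,d_m-k)}$ by $e:=\deg\gcd(f_1,f_2)$; on a stratum, writing $D=\gcd(f_1,f_2)$ (monic) and $f_i=D\phi_i$ for $i=1,2$, the map $(f_1,\ldots,f_m)\mapsto\big((\phi_1,\phi_2),(D,f_3,\ldots,f_m)\big)$ is an isomorphism onto $\textup{Poly}_1^{(d_1-k-e,\,d_2-k-e)}\times\textup{Poly}_1^{(e,\,d_3-k,\ldots,d_m-k)}$ — here one uses $\gcd(f_1,\ldots,f_m)=\gcd(D,f_3,\ldots,f_m)$ — with inverse $\big((\phi_1,\phi_2),(D,f_3,\ldots)\big)\mapsto(D\phi_1,D\phi_2,f_3,\ldots)$ given by multiplication, the forward direction being regular because $D=\gcd(f_1,f_2)$ has the fixed degree $e$ there. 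Recursing on the second factor lowers $m$, and the two-variable factors $\textup{Poly}_1^{(a,b)}$ are then decomposed by one step of the Euclidean algorithm: stratifying by $\deg r$, where $r=f_1-q f_2$ and $q$ (forced monic of degree $a-b$ by monicity of $f_1$) is the quotient, one obtains $\mathbb{A}^{a-b}\times\mathbb{G}_m\times\textup{Poly}_1^{(b,\deg r)}$ by recording $q$, the leading coefficient $\mathrm{lc}(r)$ (a unit), and the coprime monic pair $(f_2,\,r/\mathrm{lc}(r))$; iterating until a degree drops to $0$, where $\textup{Poly}_1^{(d,0)}=\mathbb{A}^d$, terminates the process. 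The outcome is a locally closed decomposition $\textup{Poly}_1^{(d_1-k,\ldots,d_m-k)}=\bigsqcup C_i$ with each $C_i\cong\mathbb{A}^{N_i}\times\mathbb{G}_m^{M_i}$, fine enough that every intermediate gcd degree occurring in the iterated computation of $\gcd(f_1,\ldots,f_m)$ is constant on $C_i$.

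For the inverse on a piece, I would recover $g$ by iterating the two-variable case, using $\gcd(h_1,\ldots,h_j)=g\cdot\gcd(f_1,\ldots,f_j)$ for $(h_1,\ldots,h_m)=(f_1g,\ldots,f_mg)=\Psi(f_1,\ldots,f_m,g)$. On $\Psi(C_i\times\mathbb{A}^k)$ the degree $\deg\gcd(h_1,h_2)=k+\deg\gcd(f_1,f_2)$ is constant, so (by the $m=2$ analysis — e.g.\ the relevant subresultant divided by its nonvanishing principal subresultant coefficient) the monic $\gcd(h_1,h_2)$ is a regular function there; then $\deg\gcd(\gcd(h_1,h_2),h_3)$ is again constant, so $\gcd(h_1,h_2,h_3)$ is regular; continuing, $g=\gcd(h_1,\ldots,h_m)$ is a morphism on $\Psi(C_i\times\mathbb{A}^k)$ — equivalently, it is read off from a fixed-shape run of the iterated Euclidean algorithm, all of whose divisions are by polynomials of prescribed nonvanishing leading coefficient. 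Then $f_i=h_i/g$ by polynomial division by the monic polynomial $g$ of degree exactly $k$, which is polynomial in the coefficients. This is a morphism inverse to $\Psi|_{C_i\times\mathbb{A}^k}$ onto its image, so that restriction is an isomorphism onto the locally closed subvariety $\Psi(C_i\times\mathbb{A}^k)$; and as the tuples of intermediate gcd degrees vary these images partition $R^{(d_1,\ldots,d_m)}_{1,k}\setminus R^{(d_1,\ldots,d_m)}_{1,k+1}$, which recovers surjectivity.

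The hard part will be making rigorous the claim that on a locus where all intermediate gcd degrees are fixed the reconstruction of $g$ from the coefficients of $h_1,\ldots,h_m$ is genuinely a morphism: one must check, step by step along all $m-1$ iterations of the Euclidean algorithm (resp.\ for each iterated gcd in the subresultant formulation), that the divisor in question has nonvanishing leading coefficient — resp.\ that the relevant principal subresultant coefficient is nonvanishing — that the quotient and remainder are regular once that quantity is inverted, and that feeding in the monic normalization keeps the degrees as asserted; then one composes these. The remaining point requiring care, though routine, is organizing the recursion of the second paragraph and reading off the $\mathbb{G}_m$-ranks $M_i$. I expect $m=2$ to be appreciably easier than $m\ge 3$: the monic gcd of two polynomials of \emph{known} degrees is given globally by a single subresultant, so $\Psi$ is then even a global isomorphism onto $R^{(d_1,d_2)}_{1,k}\setminus R^{(d_1,d_2)}_{1,k+1}$ and the decomposition is needed only for the cell structure, whereas for $m\ge 3$ the degrees of the intermediate gcds $\gcd(h_1,\ldots,h_j)$ genuinely vary, and holding them constant is exactly what forces the stratification.
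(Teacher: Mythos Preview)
Your proposal is correct and takes a genuinely different route from the paper's proof. The paper runs a \emph{simultaneous} Euclidean algorithm on all $m$ polynomials at once: at each step it picks the nonzero $(f_i)_j$ of smallest degree, reduces every other one modulo it, and stratifies by the full degree profile of this run; the same profile (shifted by $k$) stratifies the target, and the inverse on each piece is read off by running the identical algorithm on $(h_1,\ldots,h_m)$. Your argument instead reduces to the pairwise case: stratify by the iterated gcd degrees $\deg\gcd(f_1,\ldots,f_j)$, peel off a $2$-variable factor at each stage, and then decompose each $\textup{Poly}_1^{(a,b)}$ by the ordinary Euclidean algorithm. For the inverse you invoke subresultant theory rather than the algorithm: on the locus where $\deg\gcd(h_1,h_2)=k+e$ the principal subresultant coefficient $\mathrm{psc}_{k+e}$ is a nonvanishing regular function and $S_{k+e}/\mathrm{psc}_{k+e}$ is the monic gcd, so the iterated gcd extraction is regular once the intermediate degrees are pinned down.

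What each approach buys: your use of subresultants yields the pleasant bonus that for $m=2$ the map $\Psi$ is in fact a \emph{global} isomorphism onto $R_{1,k}^{(d_1,d_2)}\setminus R_{1,k+1}^{(d_1,d_2)}$ (no stratification needed for the inverse; the decomposition is only there to exhibit the cell structure), which is stronger than what the paper states and is exactly the case used downstream. It also cleanly separates the two tasks---constructing $\Psi^{-1}$ versus exhibiting the cells---whereas in the paper a single stratification serves both purposes. On the other hand the paper's approach is more self-contained (no subresultant input) and more uniform: one algorithm, one stratification, and the same run of the Euclidean algorithm that defines $C_i$ also furnishes the inverse on $C_i'$, so there is less to check. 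Your two stratifications (by iterated gcd degrees, and by Euclidean profiles within the $2$-variable factors) are genuinely different from the paper's, and typically finer.
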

    \begin{proof}
    For a closed point $x\in \text{Poly}_1^{(d_1-k,\ldots,d_m-k)}$, consider the restrictions $\overline{f_1}(z),\ldots,\overline{f_m}(z)$ to $\kappa_x[z]$, where $\kappa_x$ is the residue field at $x$. Recall the Euclidean algorithm runs as follows. Set $(\overline{f_i})_0=\overline{f_i}$ for all $i$. Then assuming $(\overline{f_i})_j$ are all defined for some $j$, let $c_j$ be an index such that $(\overline{f_{c_j}})_j$ has smallest degree among all  nonzero $(\overline{f_{i}})_j$.
    Set $(\overline{f_{c_j}})_{j+1}=(\overline{f_{c_j}})_{j}$, and for $i\ne c_j$ set $$(\overline{f_i})_{j+1}=\frac{1}{\alpha_{i,j+1}}((\overline{f_i})_{j}-(\overline{g_i})_{j+1}(\overline{f_{c_j}})_{j})$$
    where $(\overline{g_i})_{j+1}\in \kappa_x[z]$ is uniquely chosen so that the degree of the right hand side is smaller than the degree of $(\overline{f_{c_j}})_j$, and $\alpha_{i,j+1}\in \kappa_x$ is chosen so that $(\overline{f_i})_{j+1}$ is monic (set $\alpha_{i,j+1}=1$ if $(\overline{f_i})_{j}-(\overline{g_i})_{j+1}(\overline{f_{c_k}})_{j}=0$). Eventually, for some $\ell$ we will have exactly one nonzero  $(\overline{f_i})_{\ell}$, which will be the monic greatest common divisor of $\overline{f_1}(z),\ldots,\overline{f_m}(z)$ (which as $x\in \text{Poly}_1^{(d_1-k,\ldots,d_m-k)}$ will be equal to $1$).
    
    Construct a locally closed subset $x\in C_x \subset \text{Poly}^{(d_1-k,\ldots,d_m-k)}$ as follows. Set $C_{x,0}=\mathbb{A}^{d_1-k}\times \ldots \times \mathbb{A}^{d_m-k}$, and set $(f_i)_0=f_i$ for all $i$. Suppose $C_{x,j}$ and $(f_i)_j$ are constructed so that the reduction in $\kappa_x[z]$ is $(\overline{f_i})_j$ and $\deg ((f_i)_j)=\deg ((\overline{f_i})_j)$ for all $i$. Then we construct $C_{x,j+1}\subset C_{x,j}$ as follows. There are unique polynomials $(g_i)_{j+1}$ such that $\deg ((f_i)_j-(g_i)_{j+1}(f_{c_j})_j) < \deg (f_{c_j})_j$. These reduce to the $(\overline{g_i})_{j+1}$. Let $C_{x,j+1}$ be the subset of $C_{x,j}$ where the $z^i$ coefficient of $((f_i)_j-(g_i)_{j+1}(f_{c_k})_j)$ is $0$ for all $i>\deg (\overline{f_i})_{j+1}$, and where the $z^i$ coefficient is nonzero for $i=\deg (\overline{f_i})_{j+1}$. The $C_{x,j}$ are constructed so that the Euclidean algorithm may be run with $f_i$ identically to how it was run for $\overline{f_i}$, so eventually we will conclude the Euclidean algorithm and for some $\ell$ exactly one $(f_i)_\ell$ will be equal $1$ and the rest will be zero. In particular, this implies that $(f_1,\ldots,f_m)\in C_{x,\ell}$, have no common zero at any closed point, so $C_{x,\ell}\subset \text{Poly}_1^{(d_1-k,\ldots,d_m-k)}$. We let $C_x=C_{x,\ell}$.
    
    The locally closed set $C_x$ is entirely determined by the degrees of all of the $(\overline{f_i})_j$ for all $i,j$, so there are finitely many, and they partition $\text{Poly}_1^{(d_1-k,\ldots,d_m-k)}$. Furthermore, running the Euclidean algorithm backwards, it is clear that $C_{x,j}=C_{x,j+1}\times\prod_{(f_i)_{j} \ne 0 \text{ and }i\ne c_j} (\mathbb{G}_m \times \mathbb{A}^{\deg((g_i)_{j+1})})$, corresponding to the free choices of $\alpha_{i,j+1}$ and $(g_{i})_{j+1}(z)$.
    
    Thus to conclude, it suffices to show that $\Psi$ is an isomorphism when restricted to each $C_i\times \mathbb{A}^k$. Indeed, consider the analogously defined partition of $R_{1,k}^{(d_1,\ldots,d_m)}\setminus R_{1,k+1}^{(d_1,\ldots,d_m)}=\bigsqcup C_i'$ according to the intermediate degrees of polynomials in a run of the Euclidean algorithm, ending with all polynomials $0$ except one monic polynomial of degree $k$. Then after possibly reindexing, we have that $\Psi$ maps $C_i\times \mathbb{A}^k \to C_i'$ where the intermediate degrees of the nonzero polynomials for $C_i'$ are all exactly $k$ larger than the corresponding degrees in $C_i$. The existence of an inverse map is equivalent to being able to reconstruct the coefficients of $g$ from the coefficients of $f_1g,\ldots,f_mg$ in a polynomial fashion on $C_i'$, but this is accomplished by the Euclidean algorithm on $C_i'$ by construction.
    \end{proof}

    \section{Class in the Grothendieck ring of $K$-stacks}\label{sec:GrothendieckMotive}
We recall the definition of $K_0(\mathrm{Stck}_{/K})$ the Grothendieck ring of algebraic $K$-stacks.

    \begin{defn}\label{defn:GrothringStck}
        \cite[\S 1]{Ekedahl}
        The \emph{Grothendieck ring $K_0(\mathrm{Stck}_{/K})$ of algebraic stacks of finite type over $K$ all of whose stabilizer group schemes are affine} is a group generated by isomorphism classes of $K$-stacks $[\Xf]$ of finite type, modulo relations:
        \begin{itemize}
            \item $[\Xf]=[\Zf]+[\Xf \setminus \Zf]$ for $\Zf \subset \Xf$ a closed substack,
            \item $[\Ef]=[\Xf \times \Ab^n ]$ for $\Ef$ a vector bundle of rank $n$ on $\Xf$.
        \end{itemize}
        Multiplication on $K_0(\mathrm{Stck}_{/K})$ is induced by $[\Xf][\Yf]\coloneqq[\Xf \times_K \Yf]$. There is a distinguished element $\Lb\coloneqq[\A^1] \in K_0(\mathrm{Stck}_{/K})$, called the \emph{Lefschetz motive}.
    \end{defn}



    \Cref{prop:3.3isom} shows that the morphism considered in \cite[Equation (3.3)]{FW} and \cite[Proposition 18]{HP} extends to an isomorphism over $\Zb$, allowing us to extend the computation of \cite{FW, FW2, HP} to fields of positive characteristic. In particular, the expression of $[\mathrm{Poly}_1^{(d_1,d_2)}]$ found in \cite[Proposition 18]{HP} for $\mathrm{char}(K)=0$ as a polynomial in $\Lb$ is the same for fields of positive characteristic.

    \begin{prop}
        \label{prop:poly}
        Fix $d_1,d_2 \ge 0$.
        \[ [\mathrm{Poly}_1^{(d_1,d_2)}] =
        \begin{cases}
        \Lb^{d_1+d_2}-\Lb^{d_1+d_2-1} & \text{ if } d_1,d_2 >0\;, \\
        \Lb^{d_1+d_2} & \text{ if } d_1=0 \text{ or } d_2=0\;.
        \end{cases}
        \]
    \end{prop}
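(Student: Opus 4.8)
<br>

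The plan is to compute $[\mathrm{Poly}_1^{(d_1,d_2)}]$ by an induction on $\min(d_1,d_2)$ using the stratification by common factors, together with the piecewise-trivialization result of \Cref{prop:3.3isom}.

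First I would handle the boundary case. If $d_1=0$ or $d_2=0$, then one of the polynomials is the constant $1$, so the coprimality condition is vacuous and $\mathrm{Poly}_1^{(d_1,d_2)}=\mathbb{A}^{d_1+d_2}$, giving the class $\mathbb{L}^{d_1+d_2}$. This seeds the induction.

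For the main case $d_1,d_2>0$, I would start from the identity $\mathbb{A}^{d_1}\times\mathbb{A}^{d_2}=R_{1,0}^{(d_1,d_2)}$ and use the finite descending filtration $R_{1,0}^{(d_1,d_2)}\supset R_{1,1}^{(d_1,d_2)}\supset\cdots\supset R_{1,\min(d_1,d_2)+1}^{(d_1,d_2)}=\emptyset$ recorded in the excerpt. Taking classes in $K_0(\mathrm{Stck}_{/K})$ via the scissor relation, this gives
\[\mathbb{L}^{d_1+d_2}=\sum_{k=0}^{\min(d_1,d_2)}\bigl[R_{1,k}^{(d_1,d_2)}\setminus R_{1,k+1}^{(d_1,d_2)}\bigr].\]
The $k=0$ term is $[\mathrm{Poly}_1^{(d_1,d_2)}]$ itself (the complement of the resultant hypersurface $R_{1,1}^{(d_1,d_2)}=\mathcal{R}^{(d_1,d_2)}$). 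For $k\ge 1$, \Cref{prop:3.3isom} shows $R_{1,k}^{(d_1,d_2)}\setminus R_{1,k+1}^{(d_1,d_2)}$ is the image of the surjection $\Psi$ from $\mathrm{Poly}_1^{(d_1-k,d_2-k)}\times\mathbb{A}^k$, and that $\Psi$ is an isomorphism when restricted to each piece $C_i\times\mathbb{A}^k$ of a locally closed decomposition $\mathrm{Poly}_1^{(d_1-k,d_2-k)}=\bigsqcup C_i$; since these pieces are disjoint, summing the classes of their images shows $[R_{1,k}^{(d_1,d_2)}\setminus R_{1,k+1}^{(d_1,d_2)}]=[\mathrm{Poly}_1^{(d_1-k,d_2-k)}]\cdot\mathbb{L}^k$. (One should note $\Psi$ is injective globally, as sending $(f_1,f_2,g)$ to $(f_1g,f_2g)$ loses no information once the common-factor structure is known, so the images of the $C_i\times\mathbb{A}^k$ partition the target; the piecewise-isomorphism statement then yields the identity of classes directly.)

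Substituting and solving for the $k=0$ term gives
\[[\mathrm{Poly}_1^{(d_1,d_2)}]=\mathbb{L}^{d_1+d_2}-\sum_{k=1}^{\min(d_1,d_2)}[\mathrm{Poly}_1^{(d_1-k,d_2-k)}]\,\mathbb{L}^k,\]
and I would plug in the inductive hypothesis: the $k=\min(d_1,d_2)$ term contributes $\mathbb{L}^{d_1+d_2-\min(d_1,d_2)}\cdot\mathbb{L}^{\min(d_1,d_2)}=\mathbb{L}^{d_1+d_2}$ if $d_1\ne d_2$ (one of $d_1-k,d_2-k$ vanishes) or $(\mathbb{L}^{0}-\mathbb{L}^{-1})\mathbb{L}^{d_1}$... — in any case each term is known, and the sum telescopes. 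Concretely, for $1\le k<\min(d_1,d_2)$ the inductive formula gives $[\mathrm{Poly}_1^{(d_1-k,d_2-k)}]=\mathbb{L}^{d_1+d_2-2k}-\mathbb{L}^{d_1+d_2-2k-1}$, so $[\mathrm{Poly}_1^{(d_1-k,d_2-k)}]\mathbb{L}^k=\mathbb{L}^{d_1+d_2-k}-\mathbb{L}^{d_1+d_2-k-1}$, and the geometric-type sum over $k=1,\dots,\min(d_1,d_2)-1$ telescopes, leaving after adding the final term exactly $\mathbb{L}^{d_1+d_2}-\mathbb{L}^{d_1+d_2-1}$. I expect the only real subtlety to be bookkeeping at the extreme index $k=\min(d_1,d_2)$, where one falls into the boundary case of the induction; everything else is a formal manipulation in $K_0(\mathrm{Stck}_{/K})$ once \Cref{prop:3.3isom} is in hand.
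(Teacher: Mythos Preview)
Your approach is correct and is essentially the paper's own (which simply cites \cite[Proposition 18]{HP} and replaces its Step~2 by the piecewise-isomorphism \Cref{prop:3.3isom}): stratify $\Ab^{d_1}\times\Ab^{d_2}$ by the $R_{1,k}^{(d_1,d_2)}$, use \Cref{prop:3.3isom} to get $[R_{1,k}^{(d_1,d_2)}\setminus R_{1,k+1}^{(d_1,d_2)}]=[\mathrm{Poly}_1^{(d_1-k,d_2-k)}]\,\Lb^{k}$, and induct. One small slip to fix in your write-up: at $k=m\coloneqq\min(d_1,d_2)$ the boundary case gives $[\mathrm{Poly}_1^{(d_1-m,d_2-m)}]\,\Lb^{m}=\Lb^{d_1+d_2-2m}\cdot\Lb^{m}=\Lb^{d_1+d_2-m}$ (not $\Lb^{d_1+d_2}$, and certainly nothing involving $\Lb^{-1}$), after which the telescoping $\sum_{k=1}^{m-1}(\Lb^{d_1+d_2-k}-\Lb^{d_1+d_2-k-1})+\Lb^{d_1+d_2-m}=\Lb^{d_1+d_2-1}$ yields the claim.
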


    \begin{proof}

    The proof is identical to the \cite[Proposition 18]{HP} except that at Step 2, we have the morphism $\Psi$ to be an isomorphism by Proposition~\ref{prop:3.3isom}. 

    \end{proof}

    \begin{proof}[Proof of \Cref{thm:motivecount}]
    The proof is identical to the proof of \cite[Theorem 1, Corollary 2]{HP} except at Step 3, we have the Grothendieck classes of $[\mathrm{Poly}_1^{(d_1,d_2)}]$ over any field $K$ by Proposition~\ref{prop:3.3isom}.
    \end{proof}


    \section{Voevodsky's mixed motives and mixed Tate motives}\label{sec:VoevodskyMotive}

    Let $\mathbf{DM}(K,R)$ be Voevodsky's triangulated category of mixed motives with compact support denoted by $C_{*}^{c} (X)$ in \cite{Voevodsky} for separated schemes $X$ of finite type over a base field $K$ in $R$--coefficients. When $K$ is a perfect field and admits a resolution of singularities the formulation and the properties of $\mathbf{DM}(K,R)$ such as the localization triangle are worked out by \cite{Voevodsky, MVW}. When $K$ is a perfect field and may not admit a resolution of singularities but the exponential characteristic of $K$ is invertible in $R$ similar works on $\mathbf{DM}(K^{\mathrm{perf}},R)$ are done by \cite{Kelly}. And for arbitrary field $K$, we know that the pullback functor $\mathbf{DM}(K,R) \to \mathbf{DM}(K^{\mathrm{perf}},R)$ is an equivalence of categories by \cite[Proposition 8.1]{CD} (see also \cite[Theorem 5.1]{Totaro}).



    For our purposes, we note that the definition of the compactly supported motive in $\Qb$--coefficients extends to quotient stacks $[X/G]$ over a field $K$ based on the construction of \cite[Theorem 8.4]{Totaro}, and agrees with the standard definition in the special case of a quasi-projective scheme.

 We now recall the localization exact triangle which will aid us in showing the mixed Tate property of the compactly supported motives by repeated application of the fullness of $\DTM$ (2-out-of-3 property) applied to the stratification.

    \begin{prop}\label{prop:Gysin}
    For a separated scheme $X$ of finite type over an arbitrary field $K$ and a closed subscheme $Z$ of $X$, there is an exact triangle in $\DM$ called the localization exact triangle
    \begin{equation}
    \label{eqn:Gysinseq}
    \Mf^{c}(Z) \to \Mf^{c}(X) \to \Mf^{c}(X-Z) \to \Mf^{c}(Z)[1]
    \end{equation}
    \end{prop}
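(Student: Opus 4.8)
The plan is to reduce to the case of a perfect base field, over which the compactly supported motive $\Mf^{c}$ of an arbitrary finite-type separated scheme is constructed and the localization triangle is part of the standard formalism. First I would invoke the equivalence of triangulated categories $\DM \xrightarrow{\sim} \mathbf{DM}(K^{\mathrm{perf}},\Qb)$ given by pullback along $\Spec K^{\mathrm{perf}}\to \Spec K$ \cite[Proposition 8.1]{CD} (see also \cite[Theorem 5.1]{Totaro}). Under this equivalence $\Mf^{c}(Y)$ corresponds to $\Mf^{c}(Y_{K^{\mathrm{perf}}})$ for every finite-type separated $K$-scheme $Y$; this compatibility follows either from the behaviour of the six operations under extension of the base field or directly from the simplicial construction of $\Mf^{c}$, since base change to $K^{\mathrm{perf}}$ commutes with the generators (motives of smooth schemes), with proper pushforward, and with restriction along open immersions. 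Since forming the closed subscheme $Z$, its open complement $X-Z$, the base change to $K^{\mathrm{perf}}$, and the three maps in \eqref{eqn:Gysinseq} all commute with one another, it then suffices to produce the triangle over $K^{\mathrm{perf}}$, so one may assume $K$ perfect.

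Over a perfect field I would distinguish two cases. If $\mathrm{char}(K)=0$ (more generally, if $K$ admits resolution of singularities), the localization exact triangle for motives with compact support is Voevodsky's \cite{Voevodsky} (see also \cite{MVW}): $\Mf^{c}$ is covariant for proper morphisms and contravariant for open immersions, and \eqref{eqn:Gysinseq} is obtained by combining the proper pushforward along the closed immersion $Z\hookrightarrow X$ with the restriction along the open immersion $X-Z\hookrightarrow X$. If $\mathrm{char}(K)=p>0$ resolution of singularities is unavailable, but $p$ is invertible in $\Qb$, so de Jong--Gabber alterations can be used in place of resolutions; Kelly \cite{Kelly} then shows that $\mathbf{DM}(K,\Qb)$ still carries all the needed structure, in particular that $\Mf^{c}$ of a finite-type separated scheme is defined and satisfies \eqref{eqn:Gysinseq}. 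An alternative I would keep in reserve is to define $\Mf^{c}(X)=p_{!}\mathbf{1}_{X}$ for the structure morphism $p\colon X\to \Spec K$ inside the six-functor formalism on $\mathbf{DM}$ of \cite{CD}, and to deduce \eqref{eqn:Gysinseq} by applying $p_{!}$ to the localization triangle of functors $j_{!}j^{*}\to \mathrm{id}\to i_{*}i^{*}$ evaluated at the unit object, using $p_{!}i_{*}=(pi)_{!}$ (closed immersions are proper) and $p_{!}j_{!}=(pj)_{!}$.

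The step I expect to be the main obstacle is the singular, positive-characteristic case: $X$ is allowed to be arbitrarily singular, so $\Mf^{c}(X)$ cannot be reduced to motives of smooth schemes by elementary means, and in characteristic $p$ resolution of singularities is not at our disposal. This is exactly why $\Qb$-coefficients (or at least $\Zb[1/p]$-coefficients) are imposed: it is what allows Gabber's refinement of de Jong's alterations to stand in for resolutions in Kelly's work. A secondary subtlety, dealt with in the reduction step, is that for non-perfect $K$ one must rely on the nontrivial equivalence $\DM\simeq\mathbf{DM}(K^{\mathrm{perf}},\Qb)$ together with the compatibility of $\Mf^{c}$ with perfection.
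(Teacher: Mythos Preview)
Your proposal is correct and matches the paper's own proof essentially verbatim: the paper's argument is precisely to cite Voevodsky for the perfect, resolution-of-singularities case, Kelly for the perfect positive-characteristic case (using that the exponential characteristic is invertible in $\Qb$), and the equivalence $\DM\simeq \mathbf{DM}(K^{\mathrm{perf}},\Qb)$ from \cite[Proposition 8.1]{CD} and \cite[Theorem 5.1]{Totaro} to reduce from an arbitrary field to a perfect one. Your write-up simply expands this citation chain into prose and adds the optional six-functor alternative, which the paper does not use.
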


    \begin{proof} 
    For the proof see \cite[Proposition 4.1.5, Theorem 4.3.7(3)]{Voevodsky} and \cite[Proposition 5.5.5, Theorem 5.5.14(3)]{Kelly} together with the equivalence \cite[Proposition 8.1]{CD} (see also \cite[Theorem 5.1]{Totaro} and \cite[Page 2097]{Totaro}).
    \end{proof}

    Recall that given a triangulated category $\Dc$, a full subcategory $\Dc^{'}$ is a triangulated subcategory if and only if it is invariant under the shift $T$ of $\Dc$ and for any distinguished triangle $ A \to B \to C \to A[1]$ for $\Dc$ where $A$ and $B$ are in $\Dc^{'}$ there is an isomorphism $C \iso C^{'}$ with $C^{'}$ also in $\Dc^{'}$. A full triangulated subcategory $\Dc^{'} \subset \Dc$ is thick if it is closed under direct sums. We now define Voevodsky's triangulated subcategory $\DTM$ of mixed Tate motives.     

    \begin{defn}\label{def:VoevodskyTateMotive} 
        Let $\DTM$ be Voevodsky's full triangulated thick subcategory of mixed Tate motives generated by the Tate objects $\Qb(n)$.
    \end{defn}

    \begin{exmp} The motive $\Mf^c(\Pb^N)$ is of mixed Tate type, and we in fact have $$
        \Mf^c \left( \Pb^{N} \right) = \bigoplus_{i=0}^{N} \Qb\left( i \right)[2i].$$ More generally, any variety with a locally closed decomposition into pieces isomorphic to $\mathbb{G}_m^a\times (\mathbb{A}^1)^b$ for various $a,b$ is of mixed Tate type by repeated application of \Cref{prop:Gysin} (though the motive may not split like with $\Mf^c(\mathbb{P}^n)$). Such varieties were the subject of \cite{Totaro2}, and we remark that these varieties arise naturally as varieties with an action of a split solvable group with finitely many orbits (as each orbit is of the form $\mathbb{G}_m^a\times (\mathbb{A}^1)^b$).
    \end{exmp}

   The following result makes it easy to show that certain compactly supported motives $\Mf^{c}([X/G])$ of quotient stacks $[X/G]$ over a field $K$ are of mixed Tate type, i.e., $\Mf^{c}([X/G]) \in \DTM$, when $G=\mathrm{GL}(n)$.

    \begin{cor}[Corollary 8.12 of \cite{Totaro}] \label{cor:GmquotientTate} 
        Let $\Xf$ be a quotient stack over $K$. Let $E$ be a principal $\mathrm{GL}(n)$-bundle over $\Xf$ for some $n$, viewed as a stack over $K$, or equivalently $\Xf=[E/\mathrm{GL}(n)]$. Then $E$ is mixed Tate in $\DM$ if and only if $\Xf$ is mixed Tate.
    \end{cor}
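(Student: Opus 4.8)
The plan is to establish the equivalence one implication at a time, each time reducing to repeated applications of the localization triangle (\Cref{prop:Gysin}, in the form Totaro's construction makes available for quotient stacks) together with the fact that $\DTM\subset\DM$ is a full triangulated subcategory closed under Tate twists and under direct sums. I would use throughout that $\mathrm{GL}(n)$ is a special group, so every $\mathrm{GL}(n)$-torsor — in particular $E\to\Xf$ — is Zariski-locally trivial, as is every bundle associated to it.

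For ``$\Xf$ mixed Tate $\Rightarrow E$ mixed Tate'', I would write $E=\underline{\mathrm{Isom}}(\mathcal{O}_\Xf^{\,n},\mathcal{V})$ as the frame bundle of a rank-$n$ vector bundle $\mathcal{V}$ on $\Xf$ and factor it as a tower $E=E_n\to\dots\to E_1\to E_0=\Xf$ in which $E_i\to E_{i-1}$ adjoins a vector not in the span of the $i-1$ tautological vectors already chosen; concretely $E_i$ is the complement, inside the total space of the rank-$n$ bundle $\pi^{*}\mathcal{V}$ on $E_{i-1}$, of the total space of the tautological rank-$(i-1)$ subbundle. The localization triangle for this closed pair, combined with the identity $\Mf^{c}(\text{rank-}r\text{ vector bundle over }Y)=\Mf^{c}(Y)(r)[2r]$, reads
\[\Mf^{c}(E_{i-1})(i-1)[2i-2]\to\Mf^{c}(E_{i-1})(n)[2n]\to\Mf^{c}(E_i)\to\Mf^{c}(E_{i-1})(i-1)[2i-1],\]
so $\Mf^{c}(E_{i-1})\in\DTM$ forces $\Mf^{c}(E_i)\in\DTM$, and an induction up the tower gives $\Mf^{c}(E)\in\DTM$.

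For the reverse implication ``$E$ mixed Tate $\Rightarrow \Xf=[E/\mathrm{GL}(n)]$ mixed Tate'', I would invoke Totaro's finite-dimensional approximations: take $V_j=\Hom(K^{N_j},K^{n})$, let $U_j\subset V_j$ be the open locus of surjective maps (on which $\mathrm{GL}(n)$ acts freely, with quotient $Y_j:=U_j/\mathrm{GL}(n)=\mathrm{Gr}(n,N_j)$), so that $\mathrm{codim}_{V_j}(V_j\setminus U_j)=N_j-n+1\to\infty$, and recall that $\Mf^{c}(\Xf)$ is the stable value of the twists $\Mf^{c}(W_j)(-N_jn)[-2N_jn]$, where $W_j:=[(E\times U_j)/\mathrm{GL}(n)]$. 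The key observation is that $W_j$ is the $E$-bundle over $Y_j$ associated to the principal bundle $U_j\to Y_j$, hence Zariski-locally trivial; stratifying $Y_j$ by its Schubert cells $Y_j=\bigsqcup_\sigma\mathbb{A}^{d_\sigma}$ and using Quillen--Suslin (every $\mathrm{GL}(n)$-torsor over affine space is trivial), the bundle restricts over each cell to $\mathbb{A}^{d_\sigma}\times E$, so $\Mf^{c}(W_j|_{\mathbb{A}^{d_\sigma}})=\Mf^{c}(E)(d_\sigma)[2d_\sigma]$. Iterating \Cref{prop:Gysin} along the resulting stratification of $W_j$ then shows $\Mf^{c}(E)\in\DTM\Rightarrow\Mf^{c}(W_j)\in\DTM$ for every $j$, and passing to the stable value (using closure of $\DTM$ under twists, shifts, and direct sums) yields $\Mf^{c}(\Xf)\in\DTM$.

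The main obstacle I anticipate is the last step of the reverse implication: making precise how Totaro's construction presents $\Mf^{c}(\Xf)$ as the stable value of the $\Mf^{c}(W_j)$, and checking that membership in $\DTM$ survives this limiting process — ``mixed Tate'' is not a local condition, so this genuinely rests on the structure of the approximation (\cite[Theorem 8.4]{Totaro}) rather than on a naive descent or Mayer--Vietoris argument. A secondary, more routine point is to confirm that all the vector-bundle identities and localization triangles invoked above are valid for the quotient stacks $E_i$ and $W_j$, not merely for schemes, which is again part of Totaro's framework.
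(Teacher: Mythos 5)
The paper does not actually prove this statement: it is quoted verbatim as Corollary 8.12 of Totaro's \emph{The motive of a classifying space}, so the only ``proof'' in the paper is the citation. Measured against Totaro's own argument, your route is genuinely different. Totaro's proof rests on the motivic K\"unneth/Leray--Hirsch statement for torsors under special groups, $\Mf^{c}(E)\cong \Mf^{c}(\Xf)\otimes \Mf^{c}(\mathrm{GL}(n))$, with $\Mf^{c}(\mathrm{GL}(n))$ a finite direct sum of invertible Tate objects $\Q(a)[b]$: one implication is closure of the Tate subcategory under Tate twists and sums, and the other is closure under direct summands, since each $\Mf^{c}(\Xf)(a)[b]$ is then a summand of $\Mf^{c}(E)$. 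Your forward direction --- the frame-bundle tower $E_{i}$, each step being the complement of a subbundle inside a vector bundle over $E_{i-1}$, combined with the localization triangle and the $(r)[2r]$ formula for vector bundles over quotient stacks --- is complete and correct, and is close in spirit to how the K\"unneth statement itself is proved.

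The reverse implication is where your proposal has a genuine gap, and you have correctly located it. The stratification of $W_{j}$ over the Schubert cells of $\mathrm{Gr}(n,N_{j})$ is fine (Quillen--Suslin is not even needed: each cell carries an explicit row-echelon section of $U_{j}\to \mathrm{Gr}(n,N_{j})$), and it does show that each $\Mf^{c}(W_{j})$ is mixed Tate whenever $\Mf^{c}(E)$ is. But the passage from ``every approximation is mixed Tate'' to ``$\Mf^{c}(\Xf)$ is mixed Tate'' is not a formal consequence of closure under sums, shifts and twists: $\Mf^{c}(\Xf)$ is a homotopy limit of the normalized $\Mf^{c}(W_{j})$, and one must use Totaro's connectivity estimates (the codimension of $V_{j}\setminus U_{j}$ tending to infinity forces the transition maps to be isomorphisms in any fixed range of degrees) together with his criterion for a quotient stack to be mixed Tate in terms of its approximations. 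If you cite that part of Totaro's Section 8 the argument closes; as written, this step is asserted rather than proved, and it is exactly the step that Totaro's direct-summand argument sidesteps.
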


   We are now ready to prove Theorem~\ref{thm:Tatemotivic}.

    \subsection{Proof of Theorem~\ref{thm:Tatemotivic}}
    \label{subsec:Tatemotivic}

    \begin{proof} 

    By \Cref{cor:GmquotientTate}, showing  $\Mf^{c}\left(\Hom_{n}(\Pb^{1}, \Pc(a,b))\right)=\Mf^c([T/\mathbb{G}_m])$ over a field $K$ with $\mathrm{char}(K)\nmid a,b$ is mixed Tate is equivalent to showing that $\Mf^c(T)$ is of mixed Tate type. 

    As we have the stratification of $T$ by $T_{k,l}$ as in (1), it suffices to show $\Mf^{c}(T_{k,l})$ is of mixed Tate type by the repeated application of \Cref{prop:Gysin}. As $T_{k,\ell}=\mathbb{G}_m\times \mathbb{G}_m\times \mathrm{Poly}_1^{(k,\ell)}$, by \Cref{cor:GmquotientTate} it suffices to show that $\Mf^{c}(\mathrm{Poly}_1^{(k,\ell)})$ is of mixed Tate type.

    The proof that the compactly supported motive $\Mf^{c}(\mathrm{Poly}_1^{(k,\ell)})$ is of mixed Tate type now follows from \Cref{prop:Gysin} applied repeatedly to the locally closed decomposition furnished by \Cref{prop:3.3isom}, each piece of which is of mixed Tate type.

    \end{proof}


    \textbf{Acknowledgments.} We would like to thank the organizers of the Pacific Institute for the Mathematical Sciences Workshop on Arithmetic Topology at University of British Columbia in June 2019 for their excellent hospitality and inspiring conference where most of this work was done. Special thanks to Dori Bejleri, Benson Farb, Changho Han, Craig Westerland and Jesse Wolfson for many helpful conversations. Jun-Yong Park was supported by IBS-R003-D1, Institute for Basic Science in Korea. Finally, we thank the reviewer for their in-depth and helpful suggestions.


\end{document}